\def\b0{\boldsymbol{0}}
\def\bpsi{\boldsymbol{\psi}}
\def\bx{\boldsymbol{x}}
\def\bz{\boldsymbol{z}}
\def\by{\boldsymbol{y}}
\def\bv{\boldsymbol{v}}
\def\bu{\boldsymbol{u}}
\def\bb{{\boldsymbol{b}}}
\def\be{\boldsymbol{e}}
\def\bE{\boldsymbol{E}}
\def\bA{\boldsymbol{A}}
\def\bI{\boldsymbol{I}}
\begin{document}
\title{You Need to Calm Down: Calmness Regularity for a Class of Seminorm Optimization Problems\thanks{This work was supported by NSF award DMS-18-30418.}
}
\date{\today}
\author{Alex Gutierrez, Gilad Lerman, Sam Stewart}
\maketitle
\begin{abstract}
Compressed sensing involves solving a minimization problem with objective function $\Omega(\bx) = \norm{\bx}_1$ and linear constraints $\bA \bx = \bb$. Previous work has explored robustness to errors in $\bA$ and $\bb$ under special assumptions. Motivated by these results, we explore robustness to errors in $\bA$ for a wider class of objective functions $\Omega$ and for a more general setting, where the solution may not be unique. Similar results for errors in $\bb$ are known and easier to prove. More precisely, for a seminorm $\Omega(\bx)$ with a polyhedral unit ball, we prove that the set-valued map $S(\bA) = \argmin_{\bA \bx = \bb} \Omega(\bx)$ is calm in $\bA$ whenever $\bA$ has full rank and the minimum value is positive, where calmness is a kind of local Lipschitz regularity.
\end{abstract}

\section{Introduction}
Convex optimization problems with linear constraints $\bA \bx = \bb$, where $\bA \in \R^{m \times n}$ is a full-rank matrix with $m < n$,
covers a wide variety of applications. A well-known example is compressed sensing. In this example, one aims to recover a sparse signal given a vector $\bb$ of reduced number of linear measurements and the very special measurement vectors stored as the rows of $\bA$. Under some restricting assumptions, the unknown signal is the solution of
$\bA \bx= \bb$ with minimal $\ell_1$ norm
\cite{donoho_elad2003,Candes_romberg_tao2006,Candes2006,donoho_cpam_2006, donoho2006compressed}, or equivalently, the solution of
\begin{equation}\label{compressed_sensing_problem}
	\min_{\bA \bx = \bb} \norm{\bx}_1.
\end{equation}
A particular case of interest, which is motivated by application to MRI, assumes Fourier measurements \cite{Candes_romberg_tao2006}. In this case, sampling can be done by taking $m=\mathcal{O}(s \log n)$ random measurements of the Fourier coefficients of the unknown signal $\bx^*$, where $s$ is an upper bound for the sparseness of $\bx^*$. These random measurements are dot products of the form $\innerp{\bx^*}{\bpsi_k}$, where $\bpsi_k$ is a randomly selected row of the $n \times n$ discrete Fourier transform matrix. Therefore, $\bA$ has rows $\bpsi_1$, $\ldots$, $\bpsi_k$ and $\bb = \bA \bx^*$.
Cand\`{e}s, Romberg and Tao~\cite{Candes_romberg_tao2006} showed that with probability $1$ the solution of \eqref{compressed_sensing_problem} is unique and recovers $\bx^*$.
This observation extends to other kinds of measurements (see \cite{donoho_cpam_2006} and also \cite{Candes2006, donoho2006compressed}).

In applications, the actual measurements of $\Ab$ and $\bb$ are noisy, so understanding the reconstruction error under noise is important.
Several works \cite{candes2006near,candes2008restricted,donoho_elad_temlyakov2006, tropp_relax_2006} have proven that for very special measurement matrices $\bA$ and for noisy measurements $\hat{\bb}$ of the form $\hat{\bb}=\bA \bx^* + \be$, where $\norm{\be}_2$ is sufficiently small and $\bx^*$ has a sufficiently close approximation in $\ell_1$ by an $s$-sparse vector, the solution $\bx$ of \eqref{compressed_sensing_problem} with $\bb$ replaced by $\hat{\bb}$ is sufficiently close to $\bx^*$ in $\ell_2$ norm. Herman and Strohmer \cite{Herman2010} extend this theory by allowing possible errors in the measurement matrix $\bA$.

Many other applications involve error in the matrix $\bA$. Examples include radar \cite{herman2009high}, remote sensing \cite{fannjiang2010compressed}, telecommunications \cite{gribonval2008atoms} and source separation \cite{blumensath2007compressed}. More recently, Gutierrez et al.~\cite{Gutierrez2019a} developed a compression technique for model-based MRI reconstruction. Here there are at least two sources of error: the discretization of a set of ODEs and the search for a sparse basis to represent the simulation results. They use the total variation norm instead of the $\ell^1$ norm.

Motivated by these broad model problems we assume a seminorm $\Omega$ from a particular class which includes the $\ell^1$ and total variation norms, and we prove a general stability result, with respect to perturbations in $\bA$, for the convex optimization problem
\begin{equation}\label{eq:our_optimization_problem}
	\min_{\bA \bx = \bb} \Omega(\bx).
\end{equation}
Even for the special case of the $\ell_1$ norm, our result differs from the earlier perturbation result of Herman and Strohmer \cite{Herman2010} since we do not enforce the more restricted regime of singleton solution sets. We are thus able to eliminate their conditions on the matrix ${\bA}$ and prove a kind of local stability result. For general convex $\Omega$, Klatte and Kummer \cite{Klatte2015} have shown that \eqref{eq:our_optimization_problem} is stable with respect to perturbations in $\bb$. We extend their result to stability in $\bA$ for a restricted class of $\Omega$.

The paper is organized as follows. In Section~\ref{sec:related_work} we define the various notions of set-valued regularity and describe stability results for set-valued functions that are related to our problem. In Section \ref{sec:proof},
we formulate and prove our theorem using tools from subspace perturbation theory, the theory of polyhedral mappings, the theory of local error bounds for convex inequality systems, and the theory of set-valued regularity.
Finally, in Section~\ref{section:future_work} we conclude this work and clarify the difficulty of extending our approach to the full class of seminorms.

\section{Background}\label{sec:related_work}

Our theory and proof combine ideas from different areas. We review here the necessary background according to the designated topics indicated by the section titles. We remark that the theory we use is formulated for a more general settings. Therefore, here and throughout the paper we formulate ideas both for our special setting and a setting of Banach spaces with a more general form of $\Omega$ and constraints.
We often use the same notation for both settings.

\subsection{Mathematical Formulation of Our Objective}
We consider the following solution map
\begin{equation}
\label{eq:def_S}
    S(\bA) = \argmin_{\bA \bx = \bb} \Omega(\bx)
    = \argmin_{\bx \in M(\bA)} \Omega(\bx),
\end{equation}
where $M(\bA)$ denotes the constraint set
\begin{equation*}
M(\bA) = \{ \bz \in \R^n \mid \bA \bz = \bb \}.
\end{equation*}
We note that generally $S(\bA)$ is a set-valued function.
We arbitrarily fix $\bA_0 \in \R^{m \times n}$ with full rank and quantitatively study the effect of perturbing $\bA_0$ on the set-valued solution map $S(\bA)$.

\subsection{Regularity of Set-Valued Functions}

In order to quantify the effect of perturbing $\bA_0$ on the set-valued solution map $S(\bA)$, we use some well-known notions of regularity of set-valued functions. We first motivate them for our setting by considering the special case where $\Omega(\bx) = \norm{\bx}_1$ and
$S(\bA_0)$ is an entire face of the $\ell^1$ ball. Then a small perturbation in $\bA_0$ results in a jump from that face to a single vertex as we demonstrate in Figure~\ref{fig:ex1}.
A model set-valued function with such a jump,
which needs to be handled by the developed theory,
is
\[
	G(t) =
	\begin{cases}
		[0, 1] & \textrm{ if } t  \in (1/2, 1] \cr	
		0 & \textrm{ if } t \in [0, 1/2).
	\end{cases}
\]
This function has several properties, which we will formulate below.

We will state their definitions in terms of Banach spaces $X$ and $Y$, where we denote by $2^X$ all subsets of $X$, $\|\cdot \|$ the norm of $Y$ and $d(\cdot,\cdot)$, the induced distance by $\|\cdot \|$ between points or sets in $X$. Throughout the paper we go back and forth between the general setting and our special setting of $X=\R^n$ with the Euclidean norm, $\|\cdot\|$, and $Y=\R^{m \times n}$ with the distance induced by the spectral norm. We also denote the spectral norm by $\|\cdot\|$. The induced distance by either norm is denoted by either $\|\cdot-\cdot\|$, when involving only matrices, or $d(\cdot, \cdot)$, when involving sets of matrices. Even though we use the same notation, e.g., $\|\cdot\|$ and $d(\cdot,\cdot)$ for different setting, they are easily distinguishable. When assuming norms and distances in the Euclidean case, we use vectors, which we denote by boldfaced small letters; when assuming matrices, we denote them by boldfaced capital letters; and when assuming abstract Banach spaces, we denote their elements by regular small letters.

It is somewhat intuitive that the above function $G(t)$ is ``upper semicontinuous''. We clarify this notion as follows.
\begin{definition}
A map $F : Y \to 2^X$ is upper semicontinuous at $y_0$ if for any open set $V$ intersecting $F(y_0)$ there exists a neighborhood $U$ of $y_0$ such that
\[
	F(y) \cap V \neq \emptyset, \quad \textrm{ for all } y \in U.
\]
\end{definition}

\begin{figure}[htbp]
\centering
\includegraphics[width=0.45\textwidth]{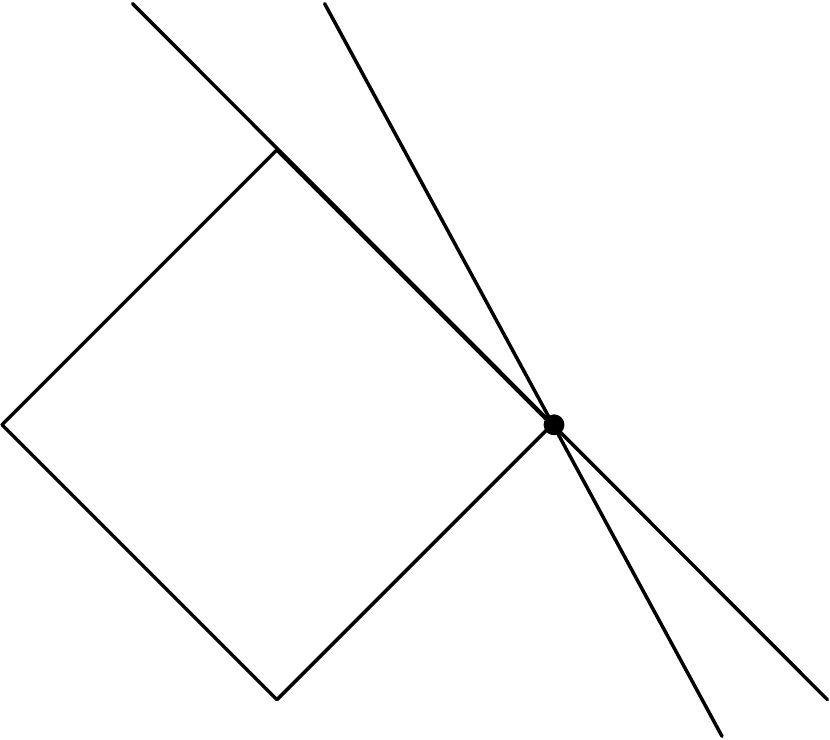}
\caption{Demonstration of a case where $S(\bA_0)$ is a face of the unit $\ell_1$ ball and $S(\bA)$, where $\bA=\bA_0+\bE$ and $0<\|\bE\| \ll 1$, is a vertex of this ball. Here, the line intersecting the face represents the solution of $\bA_0 \bx = \bb$, the other line represents the solution of $\bA \bx = \bb$, and the sets $S(\bA_0)$ and $S(\bA)$ are the intersections of these lines, respectively, with the unit $\ell_1$ ball, which is represented by the rotated square.}
\label{fig:ex1}
\end{figure}

It has been known since the 1970s that if $\Omega$ is convex, then the solution map $S(\bA)$ is upper semicontinuous  \cite[Theorem 1.15]{DiethardKlatte2002}.
Our result upgrades the upper semicontinuity of $S(\bA)$ to a kind of set-valued Lipschitz regularity, which we define next.
\begin{definition}
\label{def:calm}
A map $F : Y \to 2^X$ is calm at $(y_0, x_0) \in \textrm{Graph}(F)$ if there exist neighborhoods $U$ and $V$ of $y_0$ and $x_0$, respectively, and a constant $L(y_0, x_0)$ such that for all $y_1 \in U$ and $x_1 \in F(y_1) \cap V$,
\[
	d(x_1, F(y_0)) \leq L(y_0, x_0) \norm{ y_0 - y_1}.
\]
\end{definition}

We note that if $S(\bA)$ is calm at $(\bA_0, \bx_0)$ then for all $\bA_1$ sufficiently close, there exists $\bx_1 \in S(\bA_1)$ such that
\[
	\norm{\bx_0 - \bx_1} \leq C(\bA_0) \norm{\bA_0 - \bA_1}.
\]
This means that small perturbations to the measurement matrix $\bA$ leave at least two solutions $\bx_0 \in S(\bA_0)$ and $\bx_1 \in S(\bA_1)$ close.

We demonstrate that calmness is stronger than upper semicontinuity with the following example
\[
	H(t) = [-\sqrt{t},\sqrt{t}] \ \text{ for } t \in [0, 1].
\]
The upper semicontinuity of $H$ is obvious. It is not calm since for $t_0$ and $t_1$ with sufficiently small absolute values the distance between $H(t_0)$ and $H(t_1)$  grows as
$|\sqrt{t_0} - \sqrt{t_1}|$ and in general cannot be controlled by $|t_0 - t_1|$. Indeed, if, for example, $t_0 = 0$ and  $t > 0$ is arbitrary small, then $H(0) = \{0\}$ (or one may write $H(0)=0$ to emphasize the single value) and
$d(H(0), H(t)) = \sqrt{t} \gg t$.

At last, we define a stronger notion of regularity, which will be useful later. Unlike lower Lipschitz semicontinuity, both $y$ and $y'$ are allowed to vary.
Here and throughout the paper, $B_\delta(y)$ denotes an open ball in $Y$ with center $y$ and radius $\delta$, and similarly $B_\epsilon(x)$ denotes a corresponding open ball in $X$.
\begin{definition}\label{def:aubin}
A map $F : Y \to 2^X$ is said to have the Aubin property at $(y_0, x_0) \in \textrm{Graph } F$ if there exists $\epsilon$, $\delta$ and $C > 0$ such that for all $y, y' \in B_\delta(y_0)$ and all $x \in B_\epsilon(x_0) \cap F(y)$
\[
d(x, F(y')) \leq C \norm{ y - y'}.
\]
\end{definition}

Note that we will use the Aubin property on the ``inverse'' of $F$ defined as
\[
    F^{-1}(y) = \{ x \mid x \in F(y) \}
\]
Note that the Aubin property of $F^{-1}$ implies a local bi-Lipschitz property of $F$.

\subsection{Sufficient Conditions for Calmness of Certain Solution Maps}
In the general setting of Banach spaces $X$, $Y$ (with the above notation) and $Z$, and functions $M : Y \to 2^X$ and $f : Z \times Y \to \R$, Klatte and Kummer \cite{Klatte2015} provide sufficient conditions for calmness at $(y_0, x_0) \in Y \times 2^X$ of
\begin{equation}\label{eq:argmin_mapping}
	S(y) =
	\argmin_{z \in M(y)} f(z, y).
\end{equation}
These sufficient conditions are requirements on the regularity of the constraint set $M(y)$ and the regularity of the mapping
\begin{equation}\label{eq:L_definition}
	L(y, \mu) = \{ x \in M(y) \mid f(x, y_0) \leq \mu \}
\end{equation}
for a certain choice of $\mu$. The regularity of these quantities is quantified by calmness and the following notion of lower Lipschitz semicontinuity.
It is almost identical to calmness except that $y$ varies instead of $x$.
\begin{definition}
A set-valued map $F: Y \to 2^X$ is called lower Lipschitz semicontinuous at $(y_0, x_0)$ if there exists $\delta, C > 0$ such that
\[
	d(x_0, F(y)) \leq C \norm{ y_0 - y_1}, \quad \textrm{ for all } y \in B_{\delta}(y_0).
\]
\end{definition}

We formulate Theorem 3.1 of Klatte and Kummer \cite{Klatte2015} on sufficient conditions for calmness of $S(y)$. We use the quantity $\mu_0 = f(S(y_0), y_0)$ for $y_0 \in Y$ and $x_0 \in X$. It follows from the definition of $f$ that this quantity is well-defined as $f$ is constant on all values of $S(y_0)$.
\begin{theorem}\label{theorem:argmin_calm}
Assume the general setting of Banach spaces $X$ (with metric $d$), $Y$ (with norm $\|\cdot\|$) and $Z$, and functions $M : Y \to 2^X$, $f : Z \times Y \to \R$, $S: Y \to 2^X$ defined in \eqref{eq:argmin_mapping} and $L: Y \times \R \to X$ defined in \eqref{eq:L_definition}.
If $y_0 \in Y$, $x_0 \in X$, $\mu_0 = f(S(y_0), y_0)$
and
\begin{enumerate}
	\item $M(y)$ is calm and lower Lipschitz semicontinuous at $(y_0, x_0)$;
	\item $L(y, \mu)$ is calm at $((y_0, \mu_0), x_0)$;
\end{enumerate}
then $S(y)$ is calm at $(y_0, x_0)$.
\end{theorem}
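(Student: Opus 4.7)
The plan is to reduce calmness of $S$ to calmness of $L$ by exploiting the identity $S(y_0) = L(y_0, \mu_0)$, which holds because $\mu_0$ is by definition the minimum value of $f(\cdot, y_0)$ on $M(y_0)$. Given any test point $x \in S(y) \cap V$ with $y$ close to $y_0$, the goal is to produce a nearby point $x' \in M(y_0)$ for which $f(x', y_0) \le \mu_0 + O(\|y - y_0\|)$, so that $x' \in L(y_0, \mu_0 + O(\|y - y_0\|))$ and calmness of $L$ pushes $x'$ back onto $L(y_0, \mu_0) = S(y_0)$ at the same rate.

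First I would apply calmness of $M$ at $(y_0, x_0)$ to obtain $x' \in M(y_0)$ with $\|x - x'\| \le L_M \|y - y_0\|$, and apply lower Lipschitz semicontinuity of $M$ at $(y_0, x_0)$ to obtain $\tilde{x}_0 \in M(y)$ with $\|\tilde{x}_0 - x_0\| \le L_M' \|y - y_0\|$. Since $\tilde{x}_0$ is feasible for the minimization defining $S(y)$, optimality of $x$ yields $f(x, y) \le f(\tilde{x}_0, y)$. Invoking (local) Lipschitz continuity of $f$---which holds automatically in our application, where $f(x, y) = \Omega(x)$ is a seminorm in $x$ and independent of $y$---I would chain
\[
f(x', y_0) \le f(x, y) + L_f(\|x - x'\| + \|y - y_0\|) \le f(\tilde{x}_0, y) + O(\|y - y_0\|) \le \mu_0 + O(\|y - y_0\|),
\]
where the last step uses $f(\tilde{x}_0, y) \le f(x_0, y_0) + L_f(\|\tilde{x}_0 - x_0\| + \|y - y_0\|) = \mu_0 + O(\|y - y_0\|)$. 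Setting $\mu^* := \mu_0 + O(\|y - y_0\|)$, this shows $x' \in L(y_0, \mu^*)$.

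Finally, by shrinking the neighborhoods $U$ of $y_0$ and $V$ of $x_0$ so that $x'$ lies near $x_0$ and $(y_0, \mu^*)$ lies near $(y_0, \mu_0)$, calmness of $L$ at $((y_0, \mu_0), x_0)$ produces $z \in L(y_0, \mu_0) = S(y_0)$ with $\|x' - z\| \le L_L |\mu^* - \mu_0| = O(\|y - y_0\|)$, and the triangle inequality combined with $\|x - x'\| \le L_M \|y - y_0\|$ delivers the desired bound on $d(x, S(y_0))$. The main obstacle is not any single inequality but the bookkeeping: the neighborhoods must be shrunk to fit simultaneously the radii governing calmness of $M$, lower Lipschitz semicontinuity of $M$, and calmness of $L$, and the intermediate points $x'$, $\tilde{x}_0$ must remain inside the domains where these estimates apply. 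A secondary subtlety is the implicit Lipschitz-type hypothesis on $f$; since the theorem statement does not name it, a rigorous proof would either extract it from the conventions of Klatte--Kummer or verify it from the ambient setting, which is immediate in our application.
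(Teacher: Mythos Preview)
The paper does not supply its own proof of this theorem; it is quoted verbatim as Theorem~3.1 of Klatte and Kummer~\cite{Klatte2015} and used as a black box in the proof of the main result. So there is no in-paper argument to compare against.

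Your sketch is a faithful reconstruction of the standard argument and is essentially correct. The identity $S(y_0)=L(y_0,\mu_0)$ together with calmness of $L$ does all the work once one has established $f(\,\cdot\,,y_0)\le\mu_0+O(\|y-y_0\|)$ for an appropriate test point, and your chain via $\tilde x_0\in M(y)$ (from lower Lipschitz semicontinuity of $M$) and optimality of $x\in S(y)$ is exactly how that bound is obtained. One minor remark: you pass through an auxiliary $x'\in M(y_0)$ via calmness of $M$ and then invoke calmness of $L$ only in the $\mu$-slot; an equally valid and slightly shorter variant keeps $x\in M(y)$, notes $x\in L(y,\mu^*)$ directly, and applies calmness of $L$ in both coordinates $(y,\mu)$ simultaneously---this makes the calmness-of-$M$ hypothesis look redundant, though it reappears if one tracks constants carefully as in the paper's concluding section. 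Your flag on the unstated Lipschitz hypothesis for $f$ is apt: the paper's restatement suppresses it, but in the application $f(x,y)=\Omega(x)$ is a seminorm independent of $y$, so local Lipschitz continuity is automatic.
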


\subsection{Polyhedral Level Sets}
We assume that the unit ball of $\Omega$, $\{ \bx \mid \Omega(\bx) \leq 1 \}$, is a polyhedron.
For brevity, we refer by polyhedron to a convex polyhedron, which is the intersection of finitely many half spaces.

We relate the above property to the following well-known general notions of a polyhedral map and polyhedral level sets, which we use in this work. We define the former notion for a general function and the latter one only for $\Omega$.
For Banach spaces $X$ and $Y$, a map $F : X \to Y$ is a polyhedral map if there exists polyhedra $P_i, i = 1, \ldots, N$, such that
\[
	\textrm{Graph} F = \{ (x, F(x)) \mid x \in X\} = \bigcup_{i = 1}^N P_i.
\]
We say that $\Omega$ has polyhedral level sets if the level-set map
\begin{equation}
\label{eq:def_F}
	F(\mu) = \{ \bx \in \R^n \mid \Omega(\bx) \leq \mu \}
\end{equation}
is polyhedral. Since $\Omega$ is a seminorm, it is equivalent to the assumed property that $F(1)$ consists of a single polyhedron $P_1$ (this claim will be clearer from the geometric argument presented later in Section \ref{section:F_is_calm}).

We use a classic result from the theory of polyhedral mappings \cite{Jourani2000}:
\begin{theorem}\label{thm:polyhedral_map_theorem}
A polyhedral set-valued mapping $H : \R \to 2^{\R^n}$ is calm at all $(by_0, \bx_0) \in \textrm{Graph}(H)$.
\end{theorem}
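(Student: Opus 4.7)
The plan is to reduce the claim to a finite Hoffman-type estimate, one per polyhedron in the decomposition of $\textrm{Graph}(H)$. Write $\textrm{Graph}(H) = \bigcup_{i=1}^N P_i$ with each $P_i \subseteq \R \times \R^n$ a (convex) polyhedron, and fix $(y_0,\bx_0) \in \textrm{Graph}(H)$. After relabeling, assume that $P_1,\ldots,P_k$ are exactly the polyhedra that contain $(y_0,\bx_0)$. The set $K = \bigcup_{i>k} P_i$ is closed and avoids $(y_0,\bx_0)$, so choose open neighborhoods $U$ of $y_0$ and $V$ of $\bx_0$ with $(U\times V)\cap K = \emptyset$. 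Then for any $y_1 \in U$ and $\bx_1 \in H(y_1)\cap V$, the pair $(y_1,\bx_1)$ lies in some $P_i$ with $i \le k$, and in particular the slice $\{\bx:(y_0,\bx)\in P_i\}$ contains $\bx_0$ and is nonempty.

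Next I would use a Hoffman bound on each $P_i$, $i\le k$. Represent $P_i$ as $\{(y,\bx):\bA_i\bx + \bb_i y \le \bc_i\}$ for some matrix $\bA_i$ and vectors $\bb_i$, $\bc_i$. For $y \in \R$ set $Q_i(y) = \{\bx:\bA_i\bx \le \bc_i - \bb_i y\}$, which is exactly the $y$-slice of $P_i$. Hoffman's lemma gives a constant $\kappa_i$, depending only on $\bA_i$, such that whenever $Q_i(y_0) \ne \emptyset$,
\[
d(\bx, Q_i(y_0)) \;\le\; \kappa_i \,\bigl\|(\bA_i \bx - \bc_i + \bb_i y_0)_+\bigr\|
\]
for every $\bx \in \R^n$. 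Applying this to $\bx = \bx_1$ with $(y_1,\bx_1)\in P_i$ gives $\bA_i\bx_1 - \bc_i + \bb_i y_0 = \bb_i(y_0 - y_1) + (\bA_i\bx_1 + \bb_i y_1 - \bc_i) \le \bb_i(y_0 - y_1)$ componentwise, so the positive part is bounded by $|y_0 - y_1|\,\|\bb_i\|$, which yields $d(\bx_1, Q_i(y_0)) \le \kappa_i\|\bb_i\|\,|y_0 - y_1|$.

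To finish, note $Q_i(y_0) \subseteq H(y_0)$ for each $i\le k$ by definition of the graph decomposition, so $d(\bx_1,H(y_0)) \le d(\bx_1,Q_i(y_0))$. Taking $L(y_0,\bx_0) = \max_{i\le k} \kappa_i\|\bb_i\|$ gives
\[
d(\bx_1, H(y_0)) \;\le\; L(y_0,\bx_0)\,|y_0 - y_1|
\]
for all $y_1\in U$ and $\bx_1 \in H(y_1)\cap V$, which is exactly calmness at $(y_0,\bx_0)$.

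The main subtlety is the ``separation'' step isolating the polyhedra through $(y_0,\bx_0)$: without it, one would have to deal with polyhedra $P_i$ whose $y_0$-slice is empty, in which case Hoffman's bound degenerates. Restricting to $U\times V$ disjoint from $K$ forces every relevant slice to contain $\bx_0$, making each Hoffman constant finite and usable. Aside from that, the argument is a standard finite-union Hoffman calculation; I do not expect a fundamentally harder step.
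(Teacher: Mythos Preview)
Your argument is correct. The paper itself does not prove this theorem; it quotes it as a classical result from the theory of polyhedral mappings with a citation to Jourani, so there is no ``paper's own proof'' to compare against. Your route---localizing to the finitely many polyhedra $P_1,\dots,P_k$ through $(y_0,\bx_0)$, then applying Hoffman's lemma slice-wise to get a Lipschitz bound on each $Q_i(y)$ and finally using $Q_i(y_0)\subseteq H(y_0)$---is the standard way this fact is established, and every step checks out. In particular, the separation step ensuring $Q_i(y_0)\ni \bx_0$ for $i\le k$ is exactly what makes the Hoffman constants finite, and your componentwise estimate $(\bA_i\bx_1-\bc_i+\bb_i y_0)_+ \le (\bb_i(y_0-y_1))_+$ together with $\|(\bb_i(y_0-y_1))_+\|\le \|\bb_i\|\,|y_0-y_1|$ is valid for any monotone norm. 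Nothing further is needed.
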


\subsection{Principal Angles}
We exploit the geometric interplay between the affine subspace solution of $\bA \bx = \bb$ and the affine subspaces of the polyhedral level sets using principal angles between shifted linear subspaces \cite{Wedin1983}.  The principal angles $0 \leq \theta_1 \leq \cdots \leq \theta_r \leq \pi/2$
between two linear subspaces $\mathcal{U}$ and $\mathcal{V}$ of $\R^n$
with dimensions $r$ and $k$, respectively, where $r<k$, are defined recursively along with principal vectors $\bu_1$, $\ldots$, $\bu_r$ and $\bv_1$, $\ldots$, $\bv_r$. The smallest principal angle $\theta_1$ is given by
\begin{equation}\label{eq:theta1}
	\theta_1 = \min_{\substack{\bu \in \mathcal{U} ,\bv \in \mathcal{U}\\ \|\bu\| = \|\bv\| = 1 }} \arccos(|\bu^T \bv|).
\end{equation}
The principal vectors $\bu_1$ and $\bv_1$ achieve the minimum in \eqref{eq:theta1}.
For $2 \leq k \leq r$, the $k$th principal angle, $\theta_k$, and principal vectors, $\bu_k$ and $\bv_k$, are defined by
\begin{equation}\label{eq:thetak}
	\theta_k = \min_{\substack{\bu \in \mathcal{U}, \|\bu\| = 1, \bv \perp \bu_{1},\dots,\bu_{k-1} \\ \bv \in \mathcal{V}, \|\bv\| = 1, \bv \perp \bv_{1},\dots,\bv{k-1} }} \arccos(|\bu^T \bv|),
\end{equation}
where $\bu_k$ and $\bv_k$ achieve the minimum in \eqref{eq:thetak}

\subsection{Bounds for a System of Convex Inequalities}
\label{sec:bounds_system}
Our proof gives rise to a system of inequalities $f(\bx) \leq 0$, where $f$ is a convex function, and requires bounding the distance between the solution set of this system and a point that lies near the boundary of this set. The sharp constant of the required bound uses the subdifferential of $f$.

We first recall that the subdifferential of a convex function $f : \R^n \to \R$ is defined as the following set of subgradients of $f$:
\[
	\partial f(\bx) := \{ \bv \in \R^n \mid f(\bz) - f(\bx) \geq \innerp{\bv}{\bz - \bx} \textrm{ for all } \bz \in \R^n \}.
\]

The use of the subdifferential to bound distances to constraint sets began with Hoffman's original work \cite{Hoffman1952} on error bounds for systems of inequalities $\bA \bx \leq b$.
Define $\mathcal{S}_{\bA, \bb}$ to be the solution set of all $\bx \in \R^n$ that satisfy $\bA \bx \leq \bb$. Assuming this set is nonempty, Hoffman shows that there exists a constant $C$ so that for all $\bx \in \R^n$
\begin{equation}
    \label{eq:M_is_calm_bound_convex}
    d(\bx, \mathcal{S}_{\bA, \bb}) \leq C\|[\bA \bx - \bb]_+\|,
\end{equation}
where $([\bx]_+)_i = \max(x_i, 0)$ for $i = 1$, $\ldots$, $n$. In his original paper, he proved the bound using results from conic geometry and then computed the constants ad hoc for a few different norms. Later results, see e.g., \cite{NgaiHuynhAlexanderKruger2010}, generalize this to convex inequalities $f(\bx) \leq 0$ and compute sharp constants using the subdifferential $\partial f(\bx)$ . We will use the following theorem of \cite{NgaiHuynhAlexanderKruger2010}, where  \eqref{eq:local_error_bound} below is analogous to \eqref{eq:M_is_calm_bound_convex}.
\begin{theorem}\label{thm:local_error_bound}
	If $f : \R^n \to \R$ is a convex function, $Q = \{ \bz \in \R^n \mid f(\bz) \leq 0 \}$ and $\bx_0$ lies in the boundary of $Q$, then the following two statements are equivalent:
	\begin{enumerate}
	    \item
	There exist $c(f, \bx_0)$ and $\epsilon > 0$ such that for all $\bx \in B_{\epsilon}(\bx_0)$
	\begin{equation}\label{eq:local_error_bound}	
		d(\bx, Q) \leq c(f, \bx_0) [f(\bx)]_+;
	\end{equation}
	\item
	For all sequences $\{\bx_i\}_{i \in \mathbb{N}} \to \bx_0$ with $f(\bx_i) > 0$ for all $i \in \mathbb{N}$,
	\begin{equation}\label{eq:cond_error_bd}	
		\tau(f, \bx_0) := \liminf_{\bx_i \to \bx_0} d(0, \partial f(\bx)) > 0.
	\end{equation}
	\end{enumerate}
	Moreover $c = \tau(f, \bx_0)^{-1}$ is optimal in \eqref{eq:local_error_bound}.
\end{theorem}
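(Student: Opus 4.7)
The plan is to prove the two implications separately. The direction (1) $\Rightarrow$ (2) is a short consequence of the subgradient inequality applied at a projection, while (2) $\Rightarrow$ (1) is the substantive content and relies on Ekeland's variational principle; the sharp constant $\tau^{-1}$ then falls out by combining the estimates from both directions.

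For (1) $\Rightarrow$ (2), assume the error bound $d(\bx, Q) \leq c[f(\bx)]_+$ holds on some $B_\epsilon(\bx_0)$. Fix any sequence $\bx_i \to \bx_0$ with $f(\bx_i) > 0$, and let $\bar{\bx}_i \in Q$ be a metric projection of $\bx_i$ onto $Q$, which exists since $Q$ is closed and convex. For any $\bv_i \in \partial f(\bx_i)$, the subgradient inequality gives
\[
0 \geq f(\bar{\bx}_i) \geq f(\bx_i) + \langle \bv_i, \bar{\bx}_i - \bx_i \rangle,
\]
so by Cauchy--Schwarz $f(\bx_i) \leq \|\bv_i\|\, d(\bx_i, Q) \leq c \|\bv_i\| f(\bx_i)$. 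Cancelling $f(\bx_i) > 0$ and infimizing over $\bv_i \in \partial f(\bx_i)$ yields $d(0, \partial f(\bx_i)) \geq 1/c$, hence $\tau(f, \bx_0) \geq 1/c > 0$. This also establishes one half of the optimality claim, namely that any admissible constant satisfies $c \geq \tau^{-1}$.

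For (2) $\Rightarrow$ (1), fix $0 < \tau' < \tau(f, \bx_0)$. By the definition of $\liminf$, there exists $\epsilon_0 > 0$ such that $d(0, \partial f(\bz)) > \tau'$ whenever $\bz \in B_{\epsilon_0}(\bx_0)$ and $f(\bz) > 0$. Choose $\bx \in B_{\epsilon_0/2}(\bx_0)$ with $f(\bx) > 0$ small enough that $f(\bx)/\tau' < \epsilon_0/2$. Apply Ekeland's variational principle to the nonnegative continuous convex function $g := [f]_+$ with base point $\bx$ and parameters $\epsilon = f(\bx)$ and $\lambda = f(\bx)/\tau'$. This produces $\bar{\bx} \in \R^n$ with $\|\bar{\bx} - \bx\| \leq \lambda$, $g(\bar{\bx}) \leq g(\bx)$, and $g(\bw) \geq g(\bar{\bx}) - \tau' \|\bw - \bar{\bx}\|$ for every $\bw$. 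The last inequality is Fermat's condition for the minimization of $g + \tau'\|\cdot - \bar{\bx}\|$ at $\bar{\bx}$, i.e., $0 \in \partial g(\bar{\bx}) + \tau' \overline{B_1(0)}$. If $f(\bar{\bx}) > 0$, then $g = f$ near $\bar{\bx}$, so $\partial g(\bar{\bx}) = \partial f(\bar{\bx})$ and $d(0, \partial f(\bar{\bx})) \leq \tau'$; but the triangle inequality places $\bar{\bx}$ inside $B_{\epsilon_0}(\bx_0)$, contradicting the choice of $\epsilon_0$. Therefore $f(\bar{\bx}) \leq 0$, i.e., $\bar{\bx} \in Q$, and $d(\bx, Q) \leq \|\bx - \bar{\bx}\| \leq f(\bx)/\tau'$. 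Letting $\tau' \nearrow \tau(f, \bx_0)$ yields the bound with optimal constant $\tau^{-1}$.

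The main obstacle is calibrating the Ekeland step so that (i) the produced point $\bar{\bx}$ remains inside the neighborhood where the subgradient lower bound holds, and (ii) the assumption $f(\bar{\bx}) > 0$ can be converted into the estimate $d(0, \partial f(\bar{\bx})) \leq \tau'$ contradicting hypothesis (2). Identifying $\partial [f]_+$ with $\partial f$ at points where $f > 0$, and the bookkeeping around the boundary case $f(\bar{\bx}) = 0$ (where $\bar{\bx} \in Q$ already satisfies the conclusion), are the routine but essential steps that close the argument.
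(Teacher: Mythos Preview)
The paper does not prove this theorem; it is quoted from \cite{NgaiHuynhAlexanderKruger2010} as a known tool and used as a black box in Sections~\ref{section:F_is_calm} and~\ref{section:W_is_calm}. Your argument is correct and is essentially the standard proof one finds in that literature: the easy implication via the subgradient inequality at a nearest point of $Q$, and the substantive implication via Ekeland's variational principle applied to $[f]_+$, with the contradiction coming from the localized lower bound on $d(0,\partial f)$.

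Two minor points of exposition. First, the sentence ``Choose $\bx \in B_{\epsilon_0/2}(\bx_0)$ with $f(\bx) > 0$ small enough that $f(\bx)/\tau' < \epsilon_0/2$'' reads as if you are selecting a single convenient $\bx$; what you actually need is to shrink the neighborhood once, using continuity of $f$ and $f(\bx_0)=0$, to some $B_\epsilon(\bx_0)$ with $\epsilon\le\epsilon_0/2$ on which $[f]_+ < \tau'\epsilon_0/2$, and then run the Ekeland step for every $\bx$ in that ball with $f(\bx)>0$. Second, ``letting $\tau'\nearrow\tau$'' does not literally produce the constant $\tau^{-1}$ on a fixed ball, since the ball shrinks with $\tau'$; what it does show, together with your lower bound $c\ge 1/\tau$ from the first direction, is that $\tau^{-1}$ is the infimum of admissible constants, which is the intended meaning of ``optimal.''
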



\section{The Main Theorem and its Proof}\label{sec:proof}
We formulate the main theorem of this paper and follow up with its proof
\begin{theorem}
\label{theorem:main}
	Assume $m \leq n,$ $\bb \in \R^m$, $\bA \in \R^{m \times n}$ and $\Omega$ is a seminorm with polyhedral unit ball. Then the set-valued map defined in \eqref{eq:def_S}
	is calm at $(\bA_0, \bx_0)$ for every full-rank $\bA_0 \in \R^{m \times n}$ such that
	$\min_{\bA_0 \bx = \bb} \Omega(\bx) > 0$
	and for every $\bx_0 \in S(\bA_0)$.
\end{theorem}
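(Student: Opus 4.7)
The plan is to invoke the Klatte--Kummer sufficient conditions of Theorem~\ref{theorem:argmin_calm} in the specialization $Y = \R^{m\times n}$, $Z = X = \R^n$, $y = \bA$, and $f(\bx, \bA) = \Omega(\bx)$ (independent of $\bA$). With $\mu_0 := \min_{\bA_0 \bx = \bb} \Omega(\bx) > 0$, this reduces the theorem to two sub-lemmas: (i) the constraint map $M(\bA) = \{\bx : \bA\bx = \bb\}$ is calm and lower Lipschitz semicontinuous at $(\bA_0, \bx_0)$; and (ii) the level-set map
\[
L(\bA, \mu) = \{\bx : \bA\bx = \bb,\; \Omega(\bx) \leq \mu\}
\]
is calm at $((\bA_0, \mu_0), \bx_0)$.

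For (i), because $\bA_0$ has full row rank, the Moore--Penrose pseudoinverse $\bA \mapsto \bA^\dagger$ is Lipschitz on a neighborhood of $\bA_0$, so the minimum-norm solution $\bA^\dagger \bb$ depends Lipschitz-continuously on $\bA$. Parametrizing $M(\bA) = \bA^\dagger \bb + \ker(\bA)$ and applying Wedin's $\sin\Theta$ theorem, phrased in the principal-angle language of Subsection~2.5, bounds the largest principal angle between $\ker(\bA)$ and $\ker(\bA_0)$ by $C\|\bA - \bA_0\|$. Projecting any $\bx \in M(\bA)$ near $\bx_0$ onto $M(\bA_0)$ yields the calmness estimate, and applying the same bound to $\bx_0$ itself gives lower Lipschitz semicontinuity.

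For (ii), the polyhedral hypothesis on $\Omega$ is essential. Writing the unit ball of $\Omega$ as $\{\bx : \boldsymbol{H} \bx \leq \be\}$ for a fixed matrix $\boldsymbol{H}$ (whose rows are the vertices of the polar polytope) and the all-ones vector $\be$, the scaled sublevel set becomes $F(\mu_0) = \{\bx : \boldsymbol{H} \bx \leq \mu_0 \be\}$, and $L(\bA_0, \mu_0) = \{\bx : \bA_0 \bx = \bb,\; \boldsymbol{H}\bx \leq \mu_0\be\}$ is a polyhedron. Hoffman's bound, i.e., the polyhedral case of Theorem~\ref{thm:local_error_bound}, then produces a constant $C_H$ with
\[
d(\bx, L(\bA_0, \mu_0)) \leq C_H \bigl(\|\bA_0 \bx - \bb\| + \|[\boldsymbol{H}\bx - \mu_0\be]_+\|\bigr) \quad \text{for all } \bx \in \R^n.
\]
For any $\bx \in L(\bA, \mu)$ near $\bx_0$, the identity $\bA \bx = \bb$ gives $\|\bA_0 \bx - \bb\| = \|(\bA_0 - \bA)\bx\| \leq \|\bA - \bA_0\|\,\|\bx\|$, while $\boldsymbol{H}\bx \leq \mu\be$ yields $\|[\boldsymbol{H}\bx - \mu_0\be]_+\| \leq \sqrt{k}\,|\mu - \mu_0|$, where $k$ is the number of rows of $\boldsymbol{H}$. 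Boundedness of $\|\bx\|$ on a neighborhood of $\bx_0$ then gives $d(\bx, L(\bA_0, \mu_0)) \leq C(\|\bA - \bA_0\| + |\mu - \mu_0|)$, which is exactly the calmness required by Theorem~\ref{theorem:argmin_calm}.

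The main obstacle is sub-lemma (ii): $L(\bA, \mu)$ is not jointly polyhedral in $(\bA, \bx, \mu)$ because $\bA\bx = \bb$ is bilinear, so Theorem~\ref{thm:polyhedral_map_theorem} cannot be applied to $L$ directly, and any naive attempt to use the subdifferential criterion of Theorem~\ref{thm:local_error_bound} restricted to $M(\bA_0)$ fails at $\bx_0$ because $\bx_0$ minimizes $\Omega$ there, forcing $0$ into the relative subdifferential. The hypothesis $\mu_0 > 0$ is what ultimately rescues matters: it guarantees $\bx_0 \neq 0$, identifies a bona fide active face of the polytope $F(\mu_0)$ at $\bx_0$ on which Hoffman's constant is locally uniform, and ensures $0 \notin \partial\Omega(\bx_0)$ whenever the full-strength error bound of Theorem~\ref{thm:local_error_bound} is invoked.
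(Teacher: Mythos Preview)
Your proposal is correct and follows the same top-level architecture as the paper: both invoke Theorem~\ref{theorem:argmin_calm} and reduce to (i) regularity of $M$ and (ii) calmness of $L$. The execution, however, differs in instructive ways.

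For (i), the paper is more elementary than your Wedin-based argument: it simply uses Lemma~\ref{lemma:classic} to get $d(\bx_1, M(\bA_0)) \leq \|\bA_0^\dagger\|\,\|\bA_0\bx_1 - \bb\|$, and then notes $\bA_0\bx_1 - \bb = (\bA_0 - \bA_1)\bx_1$ when $\bx_1 \in M(\bA_1)$. No subspace perturbation theory is needed here; the principal-angle machinery shows up only later in the geometric proof that $W$ is calm.

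For (ii), your route is genuinely different and arguably cleaner. The paper does \emph{not} apply Hoffman's bound to the full polyhedral system $\{\bA_0\bx = \bb,\; \boldsymbol{H}\bx \leq \mu_0\be\}$ in one shot. Instead it invokes a second Klatte--Kummer result on calmness of intersections, writing $L(\bA,\mu) = M(\bA) \cap F(\mu)$ and then separately verifying that $F^{-1}$ has the Aubin property, that $F$ is calm, and that $W(\mu) := M(\bA_0) \cap F(\mu)$ is calm---the last of these via either a principal-angle argument or the subdifferential criterion of Theorem~\ref{thm:local_error_bound}. Your direct Hoffman approach collapses all of this into a single global error bound, and your handling of the two residuals ($\|(\bA_0-\bA)\bx\|$ and $\|[\boldsymbol{H}\bx - \mu_0\be]_+\| \leq \sqrt{k}\,|\mu-\mu_0|$) is exactly right. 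What the paper's longer path buys is a more explicit, geometrically interpretable calmness constant involving $\tau(\Omega,\bx_0)$ and the minimal positive principal angle between $M(\bA_0)$ and the active faces of $F(\mu_0)$; what your path buys is brevity and the observation that, once the sublevel sets are linearized via $\boldsymbol{H}$, the problem is a textbook Hoffman situation.
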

In analogy to the notation used in Theorem \ref{theorem:argmin_calm}, we denote  $\mu_0 = \min_{\bA_0 \bx = \bb} \Omega(\bx)$. The assumption of Theorem \ref{theorem:main} that $\mu_0 > 0$ is reasonable from an applied perspective. For example, in the case of compressed sensing, where $\Omega(\bx) = \norm{\bx}_1$, $\mu_0 = 0$ corresponds to the trivial solution $\bx = \b0$. The condition $\mu_0 > 0$ thus only excludes the trivial solution, where the sparsity prior is meaningless.

In order to prove this theorem, we first prove in Section \ref{section:M_is_calm} that $M(\bA)$ is calm and lower Lipschitz at $(\bA_0, \bx_0)$. We then prove in Section \ref{sec:L_is_calm} that the map
\[
	L(\bA, \mu) = \{ \bx \in M(\bA) \mid \Omega(\bx) \leq \mu \}
\]
is calm at $((\bA_0, \mu_0), \bx_0)$.
In view of Theorem~\ref{theorem:argmin_calm}, these results imply Theorem \ref{theorem:main}.

\subsection{$M(\bA)$ is Calm and Lower Lipschitz at $(\bA_0, \bx_0)$}\label{section:M_is_calm}
The following classical property of the Moore-Penrose inverse will be useful. We offer a short proof here.
\begin{lemma}
\label{lemma:classic}
	Let $\bx \in \R^n$ and $\bA^\dagger$ be the Moore-Penrose inverse of $\bA$ \cite{campbell2009generalized}. Then the following bound holds
\begin{equation}\label{eq:psuedo_inverse_bound}
	d(\bx, M(\bA)) \leq \norm{\bA^\dagger} \norm{\bA \bx - \bb} .
\end{equation}

\end{lemma}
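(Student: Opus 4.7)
The plan is to exhibit an explicit near-point in $M(\bA)$ using the Moore--Penrose inverse and then bound its distance from $\bx$ by the operator norm $\|\bA^\dagger\|$ times the residual $\|\bA\bx-\bb\|$. This reduces everything to checking one of the defining Penrose identities.

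Concretely, I set
\[
    \bx' := \bx - \bA^\dagger(\bA\bx - \bb)
\]
and claim that $\bx' \in M(\bA)$. To verify this, I apply $\bA$ on the left and use the Penrose identity $\bA \bA^\dagger \bA = \bA$ together with $\bA \bA^\dagger \bb = \bb$, which holds whenever $\bb$ lies in the range of $\bA$ (a necessary condition for $M(\bA)$ to be nonempty in the first place; if $M(\bA)$ is empty, the lemma is vacuous). This gives $\bA \bx' = \bA\bx - \bA\bx + \bb = \bb$, so $\bx' \in M(\bA)$. In the setting of interest $\bA$ has full row rank, in which case $\bA\bA^\dagger = \bI_m$ and the verification is immediate.

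Having produced $\bx' \in M(\bA)$, the distance bound follows from the definition of the operator norm:
\[
    d(\bx, M(\bA)) \le \|\bx - \bx'\| = \|\bA^\dagger(\bA\bx - \bb)\| \le \|\bA^\dagger\|\,\|\bA\bx - \bb\|,
\]
which is \eqref{eq:psuedo_inverse_bound}.

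There is essentially no hard step here: the only subtlety is being careful about the case where $\bA$ does not have full row rank, in which case one has to observe that the residual $\bA\bx-\bb$ need not lie in the range of $\bA$, so one should interpret the statement under the tacit assumption that $M(\bA)\neq\emptyset$. In the application that follows in Section~\ref{section:M_is_calm}, the matrix $\bA$ will be close to the full-rank matrix $\bA_0$ and hence itself of full row rank, so this subtlety does not arise.
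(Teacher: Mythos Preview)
Your proof is correct and essentially identical to the paper's. Your near-point $\bx' = \bx - \bA^\dagger(\bA\bx-\bb) = \bA^\dagger\bb + (\bI - \bA^\dagger\bA)\bx$ is precisely the orthogonal projection $P_{M(\bA)}\bx$ that the paper writes down, so the only cosmetic difference is that the paper records $d(\bx,M(\bA)) = \|\bx-\bx'\|$ as an equality while you use the inequality $d(\bx,M(\bA)) \le \|\bx-\bx'\|$; the resulting bound is the same.
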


\begin{proof}
The operator $\bI - \bA^\dagger \bA$ is the projection onto the kernel of $\bA$ and thus the projection $P_{M(\bA)}$ is given by
\[
P_{M(\bA)} \bx = \bA^\dagger \bb + (I - \bA^\dagger \bA) \bx.
\]
Therefore,

\begin{equation*}
d(\bx, M(\bA)) =
\norm{\bx - P_{M(\bA)} \bx}
= \norm{\bx - \bA^\dagger \bb - \bx + \bA^\dagger \bA \bx}
= \norm{\bA^\dagger \bA \bx - \bA^\dagger \bb} \leq \norm{\bA^\dagger} \norm{\bA \bx - \bb}.
\end{equation*}

\end{proof}

We derive the calmness of $M(\bA)$ at $(\bA_0, \bx_0) \in \R^{m \times n} \times \R^{n}$ from the above lemma. We set the neighborhoods of Definition \ref{def:calm} as $U = \R^{m \times n}$ and $V$ any bounded neighborhood of $\bx_0$ in $\R^{n}$ of diameter $D>0$, where any choice of $D>0$ is fine, but it impacts the calmness constant (see below). We arbitrarily fix $\bA_1 \in U$ and $\bx_1 \in M(\bA_1) \cap V$. To verify calmness, we directly apply Lemma \ref{lemma:classic}
\[
d(\bx_1, M(\bA_0)) \leq
	\norm{\bA_0^\dagger}  \norm{\bA_0 \bx_1- \bb}
\]	
and further note that
\[
	\bA_0\bx_1 - \bb = \bA_0\bx_1 - \bb - \bA_1 \bx_1 + \bb = (\bA_0 - \bA_1)\bx_1.
\]
Combining the above two equations and using the triangle inequality yields the desired calmness of $M(\bA)$:
\begin{equation}\label{eq:M_is_calm_bound}
	d(\bx_1, M(\bA_0)) \leq
\norm{\bA_0^\dagger} \norm{\bx_1} \norm{\bA_0 - \bA_1}
\leq
\norm{\bA_0^\dagger} (\norm{\bx_0}+D) \norm{\bA_0 - \bA_1}.
\end{equation}
The calmness constant $\norm{\bA_0^\dagger} (\norm{\bx_0}+D)$  is bounded since the diameter $D$ is bounded and $\bA_0$ is full-rank, so its lowest singular value, $\sigma_\textrm{min}(\bA_0)$ is positive and thus $\norm{\bA^\dagger_0} = 1/{\sigma_\textrm{min}(\bA_0)}$ is bounded. We remark that the bound in the first inequality of \eqref{eq:M_is_calm_bound} is analogous to that in
\eqref{eq:M_is_calm_bound_convex}, though the latter one applies to a system of inequalities and not just equalities.

To see that $M(\bA)$ is lower Lipschitz semicontinuous, we use again the formula $\norm{\bA^\dagger} = 1/{\sigma_\textrm{min}(\bA)}$.
Since the complex roots of a polynomial vary continuously with respect to the coefficients,  the eigenvalues of a real symmetric matrix vary continuously with respect to that matrix. Thus $\sigma_\textrm{min}(\bA)$, which is the minimal eigenvalue of $\bA \bA^T$, depends continuously on $\bA$. Therefore, for every $\epsilon > 0$ there exists a $\delta > 0$ such that for $\bA_1 \in B_\delta(\bA_0)$, $\sigma_\textrm{min}(\bA_1)>0$, so that $\bA_1$ is also full-rank, and
\begin{equation}\label{eq:cony_of_const}
	\norm{\bA_1^\dagger} \leq (1 + \epsilon) \norm{\bA_0^\dagger}.
\end{equation}
Swapping $\bA_1$ and $\bA_0$ in the first inequality of \eqref{eq:M_is_calm_bound} and combining it with \eqref{eq:cony_of_const} gives
\[
	d(\bx_0, M(\bA_1)) \leq (1 + \epsilon) \norm{\bA_0^\dagger} \norm{\bx_0} \norm{\bA_0 - \bA_1}.
\]
This bound shows that $M(\bA)$ is lower Lipschitz semicontinuous at $(\bA_0, \bx_0)$.

\subsection{$L(\bA, \mu)$ is Calm at $((\bA_0, \mu_0), \bx_0)$}
\label{sec:L_is_calm}
Recall the map $F(\mu)$ defined in \eqref{eq:def_F} and note that we can write
\begin{equation}\label{eq:definition_of_L}
	L(\bA, \mu) = M(\bA) \cap F(\mu).
\end{equation}
Define also the system of inequalities
\[
    W(\mu) = M(\bA_0) \cap F(\mu).
\]
The following theorem from \cite{Klatte2015} gives conditions on $M$ and $F$ that ensure $L$ is calm.
\begin{theorem}
Let $X$, $Y$ ,$Z$ be Banach spaces. If $F_1 : Y \to 2^X$ is calm at $(y_0, x_0) \in \textrm{Graph }F_1$, $F_2 : Z \to 2^X$ is calm at $(z_0, x_0) \in \textrm{Graph } F_2$, $F_1^{-1}$ has the Aubin property at $(x_0, y_0)$. Finally, assume $F_3(z) := F_1(y_0) \cap F_2(z)$ is calm at $(z_0, x_0)$. Then $F_4(y, z) := F_1(y) \cap F_2(z)$ is calm at $((y_0, z_0), x_0)$.
\end{theorem}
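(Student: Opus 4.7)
The statement is a stability-of-intersection theorem, and my plan is to reduce calmness of $F_4$ at $((y_0, z_0), x_0)$ to the hypothesized calmness of $F_3$ by routing the perturbation from $(y, z)$ to $(y_0, z_0)$ through the intermediate pair $(y_0, z)$. Pick $(y, z)$ near $(y_0, z_0)$ and a point $x \in F_4(y, z) = F_1(y) \cap F_2(z)$ near $x_0$. I will build an approximant $\bar x \in F_4(y_0, z_0)$ with $\|x - \bar x\|$ controlled linearly by $\|y - y_0\| + \|z - z_0\|$ in two stages. First I construct an intermediate $\hat x \in F_3(z) = F_1(y_0) \cap F_2(z)$ satisfying $\|x - \hat x\| \leq K_1 \|y - y_0\|$ for some constant $K_1$ depending only on the hypotheses. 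Then I apply the assumed calmness of $F_3$ at $(z_0, x_0)$ to $\hat x$ to obtain $\bar x \in F_3(z_0) = F_4(y_0, z_0)$ with $\|\hat x - \bar x\| \leq L_3 \|z - z_0\|$. The triangle inequality then yields the calmness bound for $F_4$ with constant $\max\{K_1, L_3\}$.

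The delicate step is Stage 1, the construction of $\hat x$. Calmness of $F_1$ alone yields $\tilde x \in F_1(y_0)$ with $\|x - \tilde x\| \leq L_1 \|y - y_0\|$, but $\tilde x$ need not lie in $F_2(z)$ --- it is merely close to it, since $x$ does. To force the approximant into both sets simultaneously I invoke the Aubin property of $F_1^{-1}$, which is equivalent to linear openness (metric regularity) of $F_1$ at $(y_0, x_0)$. Openness supplies a correction mechanism: any small displacement in the $x$-coordinate of a graph point of $F_1$ can be matched by a controlled displacement in the $y$-coordinate while preserving the relation $x \in F_1(y)$. I would then iterate: starting from $\tilde x$, project back onto $F_2(z)$ (cheap, since $F_2(z)$ is fixed and the current iterate is always close to it), realize the projected point as an element of $F_1(y')$ for some $y'$ near $y_0$ via openness, then apply calmness of $F_1$ to push that point back into $F_1(y_0)$. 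With careful accounting, successive residuals contract geometrically by a factor involving the product of the Aubin and calmness constants, which can be made strictly less than $1$ by shrinking the ambient neighborhoods. The resulting Cauchy sequence converges to the desired $\hat x \in F_1(y_0) \cap F_2(z)$ at total displacement linear in $\|y - y_0\|$.

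The main obstacle is arranging this alternating iteration so that it contracts with constants depending only on the hypotheses. The Aubin property is precisely the additional regularity over mere calmness of $F_1$ needed to guarantee that the ``return to $F_1(y_0)$'' step does not destroy the progress made by landing in $F_2(z)$; without openness, the $F_1$-correction could push the iterate arbitrarily far off $F_2(z)$. Alternatively, the same effect can be obtained non-iteratively in a single step via a Lyusternik--Graves-style open mapping argument or an Ekeland variational principle, both of which are standard techniques for intersection-type stability results in set-valued analysis, and either of which I anticipate underpins the Klatte--Kummer proof cited in the excerpt.
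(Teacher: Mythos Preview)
The paper does not prove this theorem; it is quoted from Klatte and Kummer \cite{Klatte2015} and then applied with $F_1=M$, $F_2=F$, $F_3=W$, $F_4=L$. So there is no in-paper proof to compare your proposal against. What I can assess is whether your sketch is a viable route to the cited result.

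Your two-stage decomposition is the right skeleton: reduce $F_4$-calmness to $F_3$-calmness by first passing from $F_1(y)\cap F_2(z)$ to $F_1(y_0)\cap F_2(z)$ at cost $O(\|y-y_0\|)$, then invoke the assumed calmness of $F_3$ to absorb the $z$-perturbation. Stage~2 is immediate once you have a legitimate $\hat x$.

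The gap is in Stage~1. Your alternating scheme (calmness of $F_1$ to land in $F_1(y_0)$; nearest point in $F_2(z)$; Aubin of $F_1^{-1}$ to realize that point in some $F_1(y')$; calmness of $F_1$ again) produces successive residuals multiplied by essentially $L_1\cdot C$, the product of the calmness constant of $F_1$ and the Aubin constant of $F_1^{-1}$. Your claim that this product ``can be made strictly less than $1$ by shrinking the ambient neighborhoods'' is false: both $L_1$ and $C$ are Lipschitz-type moduli that do not shrink with the neighborhood --- they are already the limiting rates. If $L_1 C\geq 1$ the iteration does not contract and the argument collapses. (Contrast this with genuine Lyusternik--Graves iterations, where the contraction factor comes from comparing a map to a \emph{surjective linear approximation}, not from multiplying two independent regularity moduli.)

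Your closing remark is the correct rescue: a one-shot metric-regularity argument is what actually works here. The Aubin property of $F_1^{-1}$ is exactly metric regularity of $F_1$ at $(y_0,x_0)$, and intersection stability under metric regularity of one factor plus calmness of the frozen intersection is a standard consequence (this is the content of the Klatte--Kummer lemma you are citing). But you have not supplied that argument --- you have supplied an incorrect iterative surrogate and a pointer to the right tool. To turn this into a proof, drop the iteration and instead use metric regularity of $F_1$ directly to control $d(x,F_1(y_0)\cap F_2(z))$ in one step, for instance via the standard error-bound reformulation $d(\xi,F_1(y_0))\leq \kappa\, d(y_0,F_1^{-1}(\xi))$ combined with the calmness of $F_3$.
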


We will apply the theorem with $X = \R^n$, $Y = \R^{m \times n}$, and $Z = \R$. We set $F_1 = M$, $F_2 = F$, $F_3 = W$, and $F_4 = L$. Let $((\bA_0, \mu_0), \bx_0) \in \textrm{Graph } L$. To see that $L$ is calm at this point, we thus verify that
\begin{enumerate}
	\item \label{item_1_to_prove} $F^{-1}(\bx)$ has the Aubin property at $(\bx_0, \mu_0)$.
	\item \label{item_2_to_prove} $F(\mu)$ is calm at $(\mu_0, \bx_0)$.
	\item \label{item_3_to_prove} $W(\mu)$ is calm at $(\mu_0, \bx_0)$.
\end{enumerate}
We verify property \ref{item_1_to_prove} in Section \ref{section:inverse_map_is_Aubin}, property \ref{item_2_to_prove} in Section \ref{section:W_is_calm},
and property \ref{item_3_to_prove}
in Section \ref{section:W_is_calm}.

The main trick is that we have reduced the problem from perturbations in $\bA$ to better understood cases of perturbations. For example, in verifying properties \ref{item_2_to_prove} and \ref{item_3_to_prove} we need to study tolerance to perturbations in
the right hand side of an inequality system, represented by either $F(\mu)$ or $W(\mu)$.
We prove the calmness of $W(\mu)$ and  $F(\mu)$ by both geometric or algebraic approaches. The geometric ones are quite simple, while the algebraic ones enable us to compute explicit constants.

\subsubsection{$F^{-1}$ is Aubin}\label{section:inverse_map_is_Aubin}
Since $\Omega$ is convex, it is Lipschitz on $B_\delta(\bx_0)$ for some $\delta > 0$. That is, there exists $C_\Omega \in \R$  so that
\begin{equation*}
	\abs{\Omega(\bx) - \Omega(\bx')} \leq C_\Omega \norm{\bx - \bx'} \textrm{ for all } \bx, \bx' \in B_\delta(\bx_0).
\end{equation*}
For $\bx$, $\bx' \in B_\delta(\bx_0)$ and  $\mu_1 \in F^{-1}(\bx) = [\Omega(\bx), \infty)$, we obtain by direct estimation and the above inequality the desired Aubin property as follows:
\[
	d(\mu_1, F^{-1}(\bx')) \leq \abs{\mu_1 - \Omega(\bx')} \leq \abs{\Omega(\bx) - \Omega(\bx')} \leq C_\Omega \norm{\bx - \bx'}.
\]

\subsubsection{$F$ is calm}\label{section:F_is_calm}
\noindent {\bf Geometric Proof:}
Note that by the degree-one homogeneity of the seminorm $\Omega$, for $\mu>0$
\[
	F(\mu) = \{\bx \in \R^n \mid \Omega(\bx) \leq \mu \} = \{\bx \in \R^n \mid \Omega(\bx / \mu) \leq 1 \}
	= \mu \cdot \{\bx \in \R^n \mid \Omega(\bx ) \leq 1 \}.
\]
That is, for $\mu > 0$, $F(\mu)$ is simply a scaling of the set $\{ \bx \mid \Omega(\bx) \leq 1 \}$.
By assumption, $F(1)$ is a polyhedron in $\R^n$. Since $F(\mu) = \mu F(1)$, then the graph of $F$ over any small interval is a polyhedron in $\R^{n+1}$. This observation and Theorem~\ref{thm:polyhedral_map_theorem} imply that $F$ is calm at $(\mu_0, \bx_0)$.

\noindent {\bf Algebraic Proof:}
This proof relies on Theorem~\ref{thm:local_error_bound} with  $f(\bx) := \Omega(\bx) - \mu_0$. We first verify \eqref{eq:cond_error_bd} with the latter $f$, which states that for all sequences $\{\bx_i\}_{i \in \mathbb{N}} \to \bx_0$ with $\Omega(\bx_i) > \mu_0$,
\begin{equation}\label{eq:calmness_constant_of_F}
		\tau(\Omega, \bx_0) := \liminf_{\bx_i \to \bx_0} d(0, \partial \Omega(\bx)) > 0.
\end{equation}
We note that if $\bx \in \R^n$ and $0 \in \partial \Omega(\bx)$, then $\Omega(\bx) = 0$. Indeed, $0 \in \partial \Omega(\bx)$ only when $\bx$ is a minimum of $\Omega$, and since $\Omega$ is a seminorm, its minimum is achieved only on the set $\Omega(\bx) = 0$. We further note that the assumption $\bx_0 \in F(\mu_0)$ implies that $\Omega(\bx_0) \geq \mu_0 > 0$. These two observations yield that
\begin{equation}\label{eq:distance_away_from_zero}
    d(0, \partial \Omega(\bx_0)) > 0.
\end{equation}
Next, we define $h(\bx) := d(0, \partial \Omega(\bx))$. If we can verify that $h$ is lower semicontinuous, then by \eqref{eq:distance_away_from_zero} and the definition of lower semicontinuity, we conclude \eqref{eq:calmness_constant_of_F} as follows:
$\liminf_{\bx_i \to \bx_0} h(\bx_i) \geq h(\bx_0) > 0.$
The lower-semicontinuity of $h$ becomes obvious when we rewrite it as
$h(\bx) = \inf_{ \by \in \partial \Omega(\bx)} \norm{\by}_2$. Indeed, it follows from the set-valued upper semicontinuity of $\partial \Omega(\bx)$ and the definitions of lower semicontinuity of a real-valued function and upper semicontinuity of a set-valued function.

Since we verified  \eqref{eq:cond_error_bd} with $f(\bx) = \Omega(\bx) - \mu_0$, Theorem~\ref{thm:local_error_bound}
implies that \eqref{eq:local_error_bound} holds with the same $f$ and also specifies the optimal constant in \eqref{eq:local_error_bound}. Note that by the definitions of $f$ and $F$, $Q = F(\mu_0)$. We thus proved that there exist $\epsilon > 0$, such that for all $\bx \in B_{\epsilon}(\bx_0)$
\begin{equation}\label{eq:g_local_error}
    d(\bx, F(\mu_0)) \leq \frac{1}{\tau(\Omega, \bx_0)} [\Omega(\bx) - \mu_0]_+.
\end{equation}
We conclude by showing that this bound implies the calmness of $F$. Let $\mu_1 \in B_\delta(\mu_0)$ (with $\delta$ chosen small enough so that $0 \notin B_\delta(\mu_0)$) and $\bx_1 \in F(\mu_1) \cap  B_{\epsilon}(\bx_0)$. By definition, $\Omega(\bx_1) \leq \mu_1$, and we can thus write
\begin{equation}\label{eq:Hoffman_trick}
	\Omega(\bx_1) - \mu_0 \leq \Omega(\bx_1) - \mu_0 - (\Omega(\bx_1) - \mu_1) = \mu_1 - \mu_0.
\end{equation}
The combination of \eqref{eq:g_local_error} and \eqref{eq:Hoffman_trick} results in
\begin{equation}
\label{eq:F_calmness}
    d(\bx_1, F(\mu_0)) \leq \frac{1}{\tau(\Omega, \bx_0)} [\mu_1 - \mu_0]_+ \leq \frac{1}{\tau(\Omega, \bx_0)} \abs{\mu_0 - \mu_1}
\end{equation}
for all $\bx_1 \in B_{\epsilon}(\bx_0) \cap F(\mu_1)$. Thus $F$ is calm in $(\mu_0, \bx_0)$.


\subsubsection{$W$ is calm}\label{section:W_is_calm}
\noindent {\bf Geometric Proof:}
Extending the polyhedral mapping proof from Section~\ref{section:F_is_calm} is nontrivial because $W$ does not obey a simple scaling law. Instead, we analyze carefully the intersection of the affine set, $M(\bA_0)$, and the polyhedral set, $F(\mu_0)$, near $\bx_0$.  We will bound the calmness constant for $W$ by $(\sin(\theta_{\min})\, \tau)^{-1}$, with $\tau = \tau(\Omega, \bx_0)$ from \eqref{eq:calmness_constant_of_F}, and $\theta_{\min} = \theta_{\min}(\Omega, \bx_0)$ the smallest positive principal angle between the affine space $M(\bA_0)$ and the affine sets defining the faces of $F(\mu_0)$ (of any dimension $<n$) containing $\bx_0$.


Denote by $\mathcal{U}_{1}$, $\ldots$, $\mathcal{U}_{p}$ the $p=p(\bx_0)$ faces of the polyhedron $F(\mu_0) \in \R^n$ of any dimension less than $n$ that contain $\bx_0$ (and thus also intersect the $m$-dimensional space $M(\bA_0)$).
Denote by $P_{F(\mu_0)}$ the projection operator of $\R^n$ onto $F(\mu_0)$. This operator is well-defined since $F(\mu_0)$ is convex. Choose $\epsilon > 0$ small enough so that the set
\[
    P_{F(\mu_0)}(W(\mu_1) \cap B_\epsilon(\bx_0))
\]
intersects only the faces $\mathcal{U}_{1}, \ldots, \mathcal{U}_{m}$ of $F(\mu_0)$. Clearly, it also intersects $M(\bA_0)$. Choose coordinates so that $\bx_0 = \b0$. Then $M(\bA_0)$, $\mathcal{U}_{1}, \ldots, \mathcal{U}_{m}$ can be viewed as linear subspaces of $\R^n$ and the following vectors in $\R^n$,
\[
\bx_0=\b0, \ \ \bx_1 \in W(\mu_1) \cap B_\epsilon(\bx_0),
\ \text{ and } \ \bx_2 := P_{F(\mu_0)}(\bx_1),
\]
form a right triangle, which is demonstrated in Figure~\ref{fig:triangle}.
\begin{figure}
\centering
\begin{tikzpicture}
    \centering
    \draw (0, 0) node[inner sep=0] {\includegraphics[width=.45\textwidth]{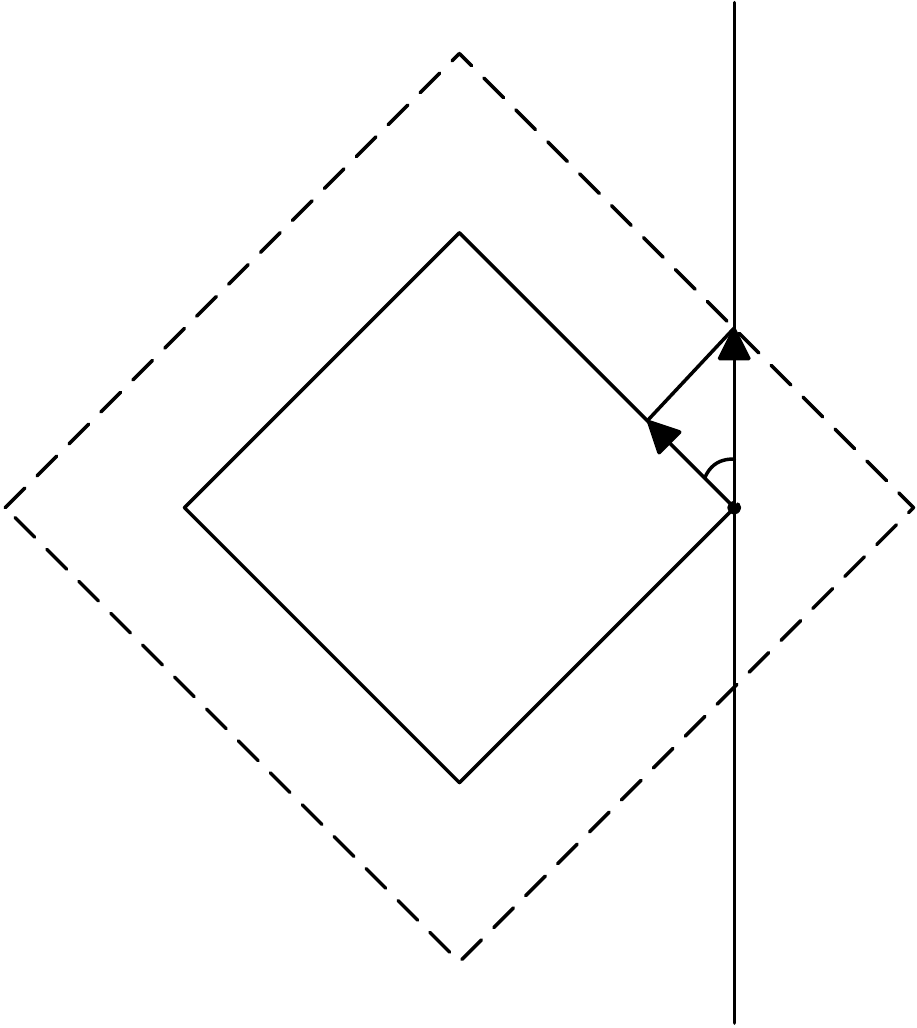}};
    \draw (3.0, 1.6) node {$\bx_1$};
    \draw (1.4, .8) node {$\bx_2$};
    \draw (3.0, 0) node {$\bx_0$};
    \draw (2.8, 5) node {$M(\bA_0)$};
    \draw (.2, 4.3) node {$F(\mu_1)$};
    \draw (.2, 2.8) node {$F(\mu_0)$};
    \draw (1.3, 1.6) node {$\mathcal{U}_1$};
    \draw (1.3, 1.6) node {$\mathcal{U}_1$};
    \draw (1.3, -1.6) node {$\mathcal{U}_2$};
    \draw (2.3, .7) node {$\theta$};
\end{tikzpicture}
\caption{Demonstration of the points $\bx_0$, $\bx_1$ and $\bx_2$ and related quantities.}
\label{fig:triangle}
\end{figure}
To prove that $W$ is calm at $(\mu_0, \bx_0)$ we wish to prove a bound of the form
\begin{equation}\label{eq:desired}
    d(\bx_1, W(\mu_0)) \leq c(\mu_0, \bx_0) \abs{\mu_0 - \mu_1}, \textrm{ for some } c(\mu_0, \bx_0) > 0.
\end{equation}

We start with the case when the triangle is degenerate, that is, either $\bx_1 = \bx_0$, or $\bx_2 = \bx_1$, or $\bx_2 = \bx_0$. First, assume that  $\bx_1 = \bx_0$. This implies that $\bx_1 \in W(\mu_0)$ and consequently the desired calmness: $d(\bx_1, W(\mu_0)) = 0 \leq \abs{\mu_0 - \mu_1}$.
Next, assume that $\bx_2 = \bx_1$. By definition, $P_{F(\mu_0)}(\bx_1) = \bx_1$ so $\bx_1 \in F(\mu_0)$. This observation and the fact that $\bx_1 \in M(\bA_0)$ implies that  $\bx_1 \in W(\mu_0)$ and the same estimate holds as above. At last, we assume that $\bx_2 = \bx_0$. Then $\bx_2 \in W(\mu_0)$ so
$d(\bx_1, W(\mu_0)) \leq \norm{\bx_1 - \bx_2}$
and we can bound $\norm{\bx_1 - \bx_2} = d(\bx_1, F(\mu_0))$ by \eqref{eq:F_calmness}.
We thus verified \eqref{eq:desired} for these cases with $c(\mu_0, \bx_0)=1/\tau(\mu_0, \bx_0)$.

In the case of non-degenerate triangles, we
note that since $\bx_0 =\b0 \in W(\mu_0)$, \begin{equation}\label{eq:distance_to_W}
    d(\bx_1, W(\mu_0)) \leq \norm{\bx_1 - \bx_0} = \norm{\bx_1}.
\end{equation}
Therefore, to obtain \eqref{eq:desired} we bound $\norm{\bx_1}$ in terms of $\abs{\mu_0 - \mu_1}$. Since $\b0$, $\bx_1$ and $\bx_2$ form a right triangle, $\norm{\bx_1} \sin \theta = \norm{\bx_1 - \bx_2}$,
where $\theta$ is the angle between $\bx_1$ and $\bx_2$. The combination of this observation with the calmness of $F$, stated in \eqref{eq:F_calmness}, results in the bound
\begin{equation}\label{eq:first_sin_equation}
    \norm{\bx_1} \sin \theta = \norm{\bx_1 - \bx_2} = d(\bx_1, F(\mu_0)) \leq \frac{1}{\tau(\Omega, \bx_0)} \abs{\mu_1 - \mu_0}.
\end{equation}
We note that $\theta$ depends on $\bx_1$ and thus \eqref{eq:first_sin_equation} does not yield yet the desired bound on $\norm{\bx_1}$.
To obtain this, we will establish a lower bound on $\theta$.

We index by $k=1$, $\ldots$, $\ell$, where $\ell \leq p$, the set of faces in $\mathcal{U}_1$, $\ldots$, $\mathcal{U}_p$ having at least one positive principal angle
with $M(\bA_0)$.
Note that since we ruled out the trivial case, where the triangle formed by $\bx_0$, $\bx_1$ and $\bx_2$ is degenerate, the latter set is not empty, that is, $\ell \geq 1$.
For $k=1$, $\ldots$, $\ell$, we denote
by $\theta_{k}^*$ the smallest positive principal angle between $\mathcal{U}_k$ and $M(\bA_0)$. Since $\bx_1$ and $\bx_2$ are nonzero and contained in distinct subspaces, they are orthogonal to the principal vectors corresponding to the zero principal angles (see \eqref{eq:thetak}). Hence $\theta_{k}^* \leq \theta$ for  $k=1$, $\ldots$, $\ell$. We further denote by $\theta_{\textrm{min}}$ the minimal value of $\theta_{k}^*$ among all $k=1$, $\ldots$, $\ell$ and have that  $\theta_{\textrm{min}} \leq \theta$.
Combining this observation with \eqref{eq:distance_to_W} yields the desired calmness of $W$ at $(\mu_0, \bx_0)$:
\[
    d(\bx_1, W(\mu_0)) \leq \frac{1}{\sin \theta_{k_\textrm{min}} \tau(\Omega \bx_0)}\abs{\mu_1 - \mu_u}.
\]

\noindent {\bf Algebraic Proof:}
The follows a similar approach to that in Section~\ref{section:F_is_calm}. We first write our set $W$ as the inequality system
\[
    W(\mu_0) = F(\mu_0) \cap M(\bA_0) = \{ \bx \in \R^n \mid f(\bx) \leq 0 \}, \quad \textrm{ where } f(\bx) := \max(\Omega(\bx) - \mu_0, d(\bx, M(\bA_0))).
\]
We note that by definition $f \geq 0$ and thus $W(\mu_0) =  \{ \bx \in \R^n \mid f(\bx) \leq 0 \}$, though we treat it as a system of inequalities.

Similarly to the proof in in Section~\ref{section:F_is_calm}, the calmness of $W(\mu)$ at $(\mu_0, \bx_0)$ with $\bx_0 \in W(\mu_0)$ follows from  \eqref{eq:cond_error_bd} with the later $f$. That is, we need to verify that for all sequences $\{\bx_i\}_{i \in \mathbb{N}} \to \bx_0$ with $f(\bx_i) > 0$
\begin{equation}
\label{eq:tau_last_one}
    	\tau'(f, \bx_0) := \liminf_{\bx_i \to \bx_0} d(0, \partial f(\bx)) > 0.
\end{equation}

The argument here is more subtle than in Section~\ref{section:F_is_calm} since here $0 \in \partial f(\bx_0)$, whereas before $0 \notin \partial \Omega(\bx_0)$.
We thus need a more subtle analysis of the function $\bx \mapsto d(0, \partial f(\bx))$. The definition of $W(\mu_0)$ and the fact that $f \geq 0$ imply that the minimal value of $f$, which is 0, is achieved at $W(\mu_0)$. Thus $0 \in \partial f(\bx)$ if and only if $f(\bx) = 0$.
Hence, for any sequence $\{\bx_i\}_{i \in \mathbb{N}} \to \bx_0$ with $f(\bx_i) > 0$, $0 \notin \partial f(\bx_i)$, or equivalently
\begin{equation}\label{eq:W_d_away_from_zero}
    d(0, \partial f(\bx_i)) > 0.
\end{equation}
In order to conclude \eqref{eq:tau_last_one}, we will show that $d(0, \partial f(\bx))$ is piecewise constant and combine this with \eqref{eq:W_d_away_from_zero}. By Theorem 3 of Chapter 4 of \cite{IoffeAleksandrDavidovich1979Toep} we have
\begin{equation}\label{eq:df}
	\partial f(\bx) = \textrm{co} \left( \{\partial \Omega(\bx)  \mid \Omega(\bx) - \mu_0 = f(\bx) \} \cup \{ \partial d(\bx, M(\bA_0)) \mid d(\bx, M(\bA_0)) = f(\bx)\} \right),
\end{equation}
where $\textrm{co} $ denotes the convex hull. Since $\Omega(\bx)$ has polyhedral level sets, $\partial \Omega(\bx)$ is piecewise constant in the sense that $\partial \Omega(\bx)$ outputs only finitely many different sets. Similarly, since $M(\bA_0)$ is also a polyhedron, $\partial d(\bx, M(\bA_0))$ is piecewise constant as a set-valued function. These two observations and \eqref{eq:df} thus imply that $\partial f(\bx)$ is piecewise constant as a set-valued function. Hence, $d(0, \partial f(\bx))$ is piecewise constant as a real-valued function and the proof is concluded.

\section{Conclusion}\label{section:future_work}

We have shown that if $\Omega$ has polyhedral level sets, then
the solution map $S(\bA)$ is calm.
By tracing the constants in our proof and in Klatte and Kummer \cite{Klatte2015}, we see that a calmness constant for $S$ is
\[
	\max \left(\norm{\bA_0^\dagger}, \frac{1}{\tau}\right)\left(1 +  \frac{2 C_\Omega}{\tau'}\right) C_\Omega \norm{\bA_0^\dagger},
\]
where $C_\Omega$ is the Lipschitz constant of $\Omega$ from Section~\ref{section:inverse_map_is_Aubin}, $\tau$ is defined in \eqref{eq:calmness_constant_of_F} and $\tau'$ is defined in \eqref{eq:tau_last_one}.

A key assumption for proving that $W$ is calm is that $\Omega$ has polyhedral sublevel sets. Removing this assumption would allow us to recover known calmness results such as the case $\Omega(\bx) = \norm{\bx}_2$ \cite{Ding1993}. Unfortunately, $W$ is in fact not calm for $\Omega(\bx) = \norm{\bx}_2$ as we show below. Therefore, our approach needs a major change if one wants to extend it to other known seminorms.

We verify our claim above by using the following equivalent definition of calmness: A map $G : Y \to 2^X$ is calm at $(\by_0, \bx_0)$ if and only if there exist a neighborhood $U$ of $\bx_0$ and a constant $c(\bx_0) > 0$ such that
\begin{equation}\label{eq:desired_bound}
	d(\bx, G(\by_0)) \leq c(\bx_0) \, d(\by_0, G^{-1}(\bx))
	\ \text{ for all } \ \bx \in U.
\end{equation}
Using this definition, we show that  $W(\mu)$ with $\Omega(\bx) = \norm{\bx}_2$ in $\R^2$, $\bA_0 = (0, -1)$ and $b = 1$ is not calm at $(\mu_0, \bx_0) = (1, (0, -1))$. Recall that by definition
\[
    W(\mu) = F(\mu) \cap M(\bA_0).
\]
In our case,
\[
    F(\mu) = \{ \bx \mid \norm{\bx}_2 \leq \mu \}, \quad M(\bA_0) = (0,-1)+\mathrm{Sp}((1,0)).
\]
Thus
\[
    W^{-1}(\bx) = [\norm{\bx}_2, \infty)
\]
so the distance is simply
\[
	d(\mu_0, W^{-1}(\bx)) = \norm{\bx}_2 - \mu_0 = \sqrt{\bx_1^2 + \bx_2^2} - \mu_0.
\]
Consequently, noting that $W(\mu_0) = (0, -1)$ we have
\[
		\lim_{\bx_2 = -1, \bx_1 \to 0} \frac{d(\mu_0, W^{-1}(\bx))}{d(\bx, W(\mu_0))} = \lim_{\bx_2 = -1, \bx_1 \to 0} \frac{\sqrt{\bx_1^2 + 1} - 1}{\bx_1} = 0
\]
and thus no such bound of the form \eqref{eq:desired_bound} exists with $c(\bx_0) > 0$.

\printbibliography

\end{document}